\numberwithin{equation}{section}
\newtheorem{theorem}{Theorem}[section]
\newtheorem{lemma}[theorem]{Lemma}
\theoremstyle{definition}
\theoremstyle{remark}
\newcommand{\be}{\begin{equation}}
\newcommand{\ee}{\end{equation}}
\begin{document}

\title[ stability for gDNLS in the endpoint case
 ]
{ Orbital stability of solitary waves for  generalized derivative nonlinear Schr\"odinger equations in the endpoint case}

 \author{Qing Guo
}

\address{College of Science, Minzu University of China, Beijing 100081, China}
\email{guoqing0117@163.com}

\begin{abstract}
We consider the following generalized derivative nonlinear Schr\"odinger equation
\begin{equation*}
i\partial_tu+\partial^2_xu+i|u|^{2\sigma}\partial_xu=0,\ (t,x)\in\mathbb R\times\mathbb R
\end{equation*}
where $\sigma\in(0,1)$.
The equation has a two-parameter family of solitary waves
$$u_{\omega,c}(t,x)=\Phi_{\omega,c}(x)e^{i\omega t+\frac{ic}2x-\frac i{2\sigma+2}\int_0^x\Phi_{\omega,c}(y)^{2\sigma}dy},$$
with $(\omega,c)$ satisfying $\omega>c^2/4$, or $\omega=c^2/4$ and $c>0$.
The stability theory in the frequency region $\omega>c^2/4$  was studied previously. In this paper, we
prove the stability of the solitary wave solutions in the endpoint case $\omega=c^2/4$ and $c>0$.

\end{abstract}

\maketitle
MSC: 35A15, 35B35, 35Q55
\vskip0.5cm
{\bf Keywords:} Generalized Derivative Nonlinear Schr\"{o}dinger equation;  Orbital stability; Endpoint case; Variational method
\vskip0.5cm

\section{Introduction}

The derivative nonlinear Schr\"odinger (DNLS) equation
\begin{equation}\label{eq0}
i\partial_tv+\partial^2_xv+i\partial_x(|v|^2v)=0,\ (t,x)\in\mathbb R\times\mathbb R
\end{equation}
appears in the long wave-length approximation of Alfv\'en waves propagating in plasma \cite{m-o-m-t,mjolhus,m-m-w}.
Applying the gauge transformation
$$u(t,x)=v(t,x)e^{\frac i2\int_{-\infty}^x|v(t,x)|^2dx},$$
the equation \eqref{eq0} has the Hamiltonian form
\begin{equation}\label{eqdnls}
i\partial_tu+\partial^2_xu+i|u|^2\partial_xu=0,\ (t,x)\in\mathbb R\times\mathbb R.
\end{equation}

The Cauchy problem for \eqref{eqdnls} has been studied by many researchers.                          
The locally well-posedness theory  in the energy space  $H^1(\mathbb R)$
 was studied in \cite{hn,h-o92,h-o94,t-f}.
 Local well-posedness in low-regularity spaces $H^s(\mathbb R))$, $s\geq \frac12$ was investigated by Takaoka \cite{th}
 using the Fourier restricted method. Biagioni and Linares \cite{b-l}
proved that  when $s < \frac12$, the solution map from $H^s(\mathbb R)$ to $C([-T, T] : H^s(\mathbb R)),T>0$
for \eqref{eqdnls} is not locally uniformly continuous.

The problem of global well-posedness has attracted the attention of a number of authors.
Hayashi and Ozawa  \cite{h-o94,ot} proved the global existence in $H^1(\mathbb R)$ with $\|u_0\|_{L^2}^2<2\pi$.
Wu \cite{wyf13,wyf15} showed it holds for initial data $u_0$ having the mass $\|u_0\|_{L^2}^2$ less than threshold $4\pi$.
For the initial data with low regularity,
 Colliander, Keel,
Staffilani, Takaoka, and Tao \cite{c-k-s-t-t02} proved that the $H^s$-solution is global if $\|u_0\|_{L^2}^2<2\pi$
when $s > 1/2$ by the I-method (see also \cite{c-k-s-t-t01,th}).  Miao, Wu and Xu \cite{m-w-x}
showed that $H^{\frac12}$-solution is global if $\|u_0\|_{L^2}^2<2\pi$. Guo and Wu \cite{gzh-wyf} improved this result
to obtain that $H^{\frac12}$-solution is global if $\|u_0\|_{L^2}^2<4\pi$.
Despite the amount of studies devoted to \eqref{eqdnls}, existence of blowing up solutions remains an open problem.

 It is known that \eqref{eqdnls}
 has a two-parameter family of the solitary waves $u_{\omega,c}(t,x)=e^{i\omega t}\phi_{\omega,c}(x-ct)$, where
$(\omega,c)$ satisfies $\omega>c^2/4$ or $\omega=c^2/4$ and $c>0$.
 Boling Guo and Yaping Wu \cite{g-w} proved that the solitary waves $u_{\omega,c}$ are
orbitally stable when  $\omega>c^2/4$ and  $c<0$ by the abstract theory of Grillakis, Shatah,
and Strauss \cite{g-s-s87,g-s-s90} and the spectral analysis of the linearized operators. Colin and
Ohta \cite{c-o} proved that the solitary waves $u_{\omega,c}$ are orbitally stable when $\omega>c^2/4$ by
characterizing the solitary waves from the view point of a variational structure. The
case of $\omega=c^2/4$ and $c>0$ is treated by Kwon and Wu \cite{k-w}. Recently, the stability
of the multi-solitons is studied by Miao, Tang, and Xu \cite{m-t-x} and Le Coz and Wu \cite{c-w}.

Liu, Simpson, and Sulem \cite{l-s-s}
introduced an extension of \eqref{eqdnls} with general power nonlinearity, which is the so-called
generalized derivative nonlinear Schr\"odinger equation:
\begin{equation}\label{eqgdnls}
\begin{cases}
i\partial_tu+\partial^2_xu+i|u|^{2\sigma}\partial_xu=0,\ (t,x)\in\mathbb R\times\mathbb R\\
u(0,x)=u_0(x)
\end{cases}
\end{equation}
 where $\sigma>0$. The equation of \eqref{eqgdnls} is invariant under the scaling transformation
 $$u_\gamma(t,x)=\gamma^{\frac1{2\sigma}}u(\gamma^2t,\gamma x),\ \ \gamma>0,$$
which implies that its critical Sobolev exponent is $s_c=\frac12-\frac1{2\sigma}$.
Hayashi and Ozawa \cite{h-o16} proved local well-posedness in $H^1(\mathbb R)$ when $\sigma\geq1$ and showed that
the following quantities are conserved:
\begin{equation}\label{M}
M(u)=\|u\|_{L^2}^2=M(u_0),
\end{equation}
 \begin{align}\label{E}
E(u)=\|\partial_xu\|_{L^2}^2-\frac1{\sigma+1}Im\int_{\mathbb R}|u|^{2\sigma}u\overline{\partial_xu}dx=E(u_0),
\end{align}
\begin{equation}\label{P}
P(u)=Re\int_{\mathbb R}i\partial_xu\bar udx=Im\int_{\mathbb R}u\overline{\partial_xu}dx=P(u_0).
\end{equation}
Moreover, they proved global well-posedness for small initial data. They also constructed global solutions for any initial data
 in the $L^2$-subcritial case
$1/2\leq\sigma<1$.
Recently, Fukaya, Hayashi, Inui \cite{f-h-i} and Miao, Tang, xu \cite{m-t-x} investigate the global well-posedness for \eqref{eqgdnls} in the case $\sigma>1$ by  variational
argument.

Similar to the equation \eqref{eqdnls}, by \cite{l-s-s}, \eqref{eqgdnls} has a two-parameter family of solitary waves
$$u_{\omega,c}(t,x)=e^{i\omega t}\phi_{\omega,c}(x-ct),$$
with $(\omega,c)$ satisfying $\omega>c^2/4$, or $\omega=c^2/4$ and $c>0$,
\begin{align}\label{phi}
\phi_{\omega,c}(x)=\Phi_{\omega,c}(x)e^{\frac{ic}2x-\frac i{2\sigma+2}\int_0^x\Phi_{\omega,c}(y)^{2\sigma}dy}
\end{align}
and
\begin{align}\label{Phi}
\Phi_{\omega,c}(x)=
\begin{cases}
\left(\frac{(\sigma+1)(4\omega-c^2)}{2\sqrt{\omega}cosh(\sigma\sqrt{4\omega-c^2}x-c}\right)^{\frac1{2\sigma}},\ \ &if\ \omega>\frac{c^2}4,\\
\left(\frac{2(\sigma+1)c}{\sigma^2(cx)^2+1}\right)^{\frac1{2\sigma}},\ \ &if\ \omega=\frac{c^2}4 \ and\ c>0.
\end{cases}
\end{align}
Moreover, $\Phi_{\omega,c}$ is the positive even solution of
\begin{equation}\label{eqPhi}
-\partial^2_{x}\Phi+(\omega-\frac{c^2}4)\Phi+\frac c2|\Phi|^{2\sigma}\Phi-\frac{2\sigma+1}{(2\sigma+2)^2}|\Phi|^{4\sigma}\Phi,
\ \ x\in\mathbb R,
\end{equation}
and the complex-valued function $\phi_{\omega,c}$ satisfies
\begin{equation*}
-\partial^2_{x}\phi+\omega\phi+ic\partial_x\phi-i|\phi|^{2\sigma}\partial_x\phi=0
\ \ x\in\mathbb R.
\end{equation*}
Liu, Simpson and Sulem \cite{l-s-s} proved that  when $1<\sigma<2$, for some $z_0=z_0(\sigma)\in(0,1)$,  if
$-2\sqrt\omega<c<2z_0\sqrt\omega$,
the solitary waves are orbitally stable and if $2z_0\sqrt\omega<c<2\sqrt\omega$, they are orbitally unstable.
They also showed that the solitary waves for all $\omega>c^2/4$ are orbitally unstable when $\sigma\geq2$ and orbitally
stable when $0<\sigma<1$.
In \cite{fn}, it is proved that the solitary waves are orbitally unstable if $c=2z_0\sqrt\omega$ when $3/2<\sigma<2$.
Tang and Xu \cite{t-x}  investigated stability of the two sum of solitary waves for \eqref{eqgdnls}.
We also refer to \cite{hcc,sgn}
for some lower regularity results.

In this present work, we consider the stability of solitary wave solutions of \eqref{eqgdnls} in the endpoint case $\omega=c^2/4$
when $\sigma\in(0,1)$. For simplicity, we denote $\phi_{c}=\phi_{c^2/4,c}$, which solves
\begin{equation}\label{eqphi}
-\partial^2_{x}\phi+\frac {c^2}4\phi+ic\partial_x\phi-i|\phi|^{2\sigma}\partial_x\phi=0,\
\ \ x\in\mathbb R.
\end{equation}
 More precisely, we prove that when $\omega=c^2/4$, the solitary waves \eqref{phi}  are orbitally stable in the
  sense of the following.

\begin{theorem}\label{th1}
For any $\varepsilon>0$, there exists some $\delta=\delta(\varepsilon)$ such that if
\begin{equation}
\|u_0-\phi_c\|_{H^1}\leq\delta,
\end{equation}
then there exist $\theta(t)\in[0,2\pi), y(t)\in\mathbb R$
such that the solution $u(t)$ of the equation \eqref{eqgdnls} satisfies that,
 for any $t\in \mathbb R$,
$$\|u(t)-e^{i\theta(t)}\phi(\cdot-y(t))\|_{H^1}\leq\varepsilon.$$
\end{theorem}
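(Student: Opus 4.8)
\emph{Plan.} The three conserved quantities \eqref{M}--\eqref{P} suggest working with the action $S_c(u)=E(u)+\frac{c^2}4M(u)+cP(u)$, for which $\phi_c$ is a critical point and which is conserved along the flow of \eqref{eqgdnls}. First I would pass to the gauge variable $v=ue^{-\frac{ic}2x}$, which removes the boost; a direct computation (completing the square in the quadratic part and tracking the derivative nonlinearity) collapses all three quadratic contributions and yields the $c^2/4$--free expression
\be
S_c(u)=\|\partial_x v\|_{L^2}^2-\frac1{\sigma+1}\,\mathrm{Im}\!\int_{\mathbb R}|v|^{2\sigma}v\,\overline{\partial_x v}\,dx+\frac{c}{2(\sigma+1)}\int_{\mathbb R}|v|^{2\sigma+2}\,dx.
\ee
The decisive feature of the \emph{endpoint} $\omega=c^2/4$ is that \textbf{no $L^2$--term survives}: $S_c$ is only $\dot H^1$--coercive, so the Grillakis--Shatah--Strauss convexity/spectral scheme degenerates. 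Testing \eqref{eqPhi} against $v$ and against the $L^2$--preserving dilation $v\mapsto\lambda^{1/2}v(\lambda\,\cdot)$ produces two virial identities that combine to $\|\partial_x v\|_{L^2}^2=\frac{c}{2(\sigma+1)}\int|v|^{2\sigma+2}$ at the soliton, whence $S_c(\phi_c)=\frac{\sigma}{\sigma+1}\|\partial_x v\|_{L^2}^2>0$. Crucially, since $\sigma\in(0,1)$ the Gagliardo--Nirenberg exponent in $\|v\|_{L^{4\sigma+2}}^{2\sigma+1}\le C\|\partial_x v\|_{L^2}^{\sigma}\|v\|_{L^2}^{\,\sigma+1}$ satisfies $\sigma<1$, so the nonlinear term is genuinely \emph{subquadratic} in $\|\partial_x v\|_{L^2}$; this is the one place where $\sigma<1$ is used quantitatively, and it guarantees that $E$ is coercive on sets of fixed mass.

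The plan is therefore to characterise $\phi_c$, up to the symmetries $u\mapsto e^{i\theta}u(\cdot-y)$, as a minimiser of the \emph{conserved} two-constraint problem $J:=\inf\{E(u):M(u)=M(\phi_c),\ P(u)=P(\phi_c)\}$. Because $\phi_c$ has algebraic decay $\Phi_c(x)\sim|x|^{-1/\sigma}$ (from \eqref{Phi}), one checks $\phi_c\in H^1$ for $\sigma<1$, so the constraints are finite. Coercivity gives that minimising sequences are bounded in $H^1$; the heart of the matter is to upgrade this to precompactness modulo the symmetry group via the concentration--compactness principle. \emph{Vanishing} should be ruled out by the strict inequality $E(\phi_c)<P(\phi_c)^2/M(\phi_c)$, the right-hand side being the ``free'' value $\inf\{\|\partial_x v\|_{L^2}^2:M=M(\phi_c),\,P=P(\phi_c)\}=P^2/M$ attained by frequency concentration (spatial spreading). \emph{Dichotomy} should be excluded by the strict subadditivity of $(\mu,p)\mapsto J(\mu,p)$, which I would deduce from the homogeneities $\|\partial_x v_\lambda\|_{L^2}^2\sim\lambda^2$, $\mathrm{Im}\!\int|v_\lambda|^{2\sigma}v_\lambda\overline{\partial_x v_\lambda}\sim\lambda^{\sigma+1}$ together with $\sigma<1$. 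Finally the minimiser solves \eqref{eqPhi} with Lagrange multipliers $(\omega,c)$; matching the constraints and invoking the uniqueness of the positive even solution of \eqref{eqPhi} (with the explicit profile \eqref{Phi}) would identify the minimiser with the orbit $\{e^{i\theta}\phi_c(\cdot-y)\}$.

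\textbf{The main obstacle} is precisely the endpoint degeneracy described above. With no $L^2$--term to pin the scale, the Hessian of $d(\omega,c)=S_{\omega,c}(\phi_{\omega,c})$ is singular at $\omega=c^2/4$, and a minimising sequence may try to escape along the very scaling direction that becomes marginal at the endpoint, in accordance with the slow algebraic decay of $\Phi_c$. Consequently the two strict inequalities (the vanishing bound $E(\phi_c)<P(\phi_c)^2/M(\phi_c)$ and the strict subadditivity) cannot be obtained by a soft argument: they require the \emph{sharp} constants and their attainment, and one must also verify that no soliton $\phi_{\omega,c}$ with $\omega>c^2/4$ shares the constraints $(M(\phi_c),P(\phi_c))$ with strictly smaller energy, i.e. that the limiting map $(\omega,c)\mapsto(M,P)$ singles out the endpoint. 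Establishing these borderline quantitative facts is where the real work lies.

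Granting the variational characterisation and the compactness, orbital stability follows by the standard contradiction argument. If the conclusion of the theorem failed, there would exist $\ep>0$, data $u_{0,n}\to\phi_c$ in $H^1$ and times $t_n$ with $\inf_{\theta,y}\|u_n(t_n)-e^{i\theta}\phi_c(\cdot-y)\|_{H^1}\ge\ep$. Since $M,E,P$ are conserved and continuous, $M(u_n(t_n))\to M(\phi_c)$, $P(u_n(t_n))\to P(\phi_c)$ and $E(u_n(t_n))\to E(\phi_c)=J$, so (after a harmless adjustment onto the constraint set) $u_n(t_n)$ is a minimising sequence for $J$; the compactness step then forces $u_n(t_n)\to e^{i\theta_n}\phi_c(\cdot-y_n)$ in $H^1$ along a subsequence, contradicting the lower bound $\ep$. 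Global existence of the $u_n$ is guaranteed by the global well-posedness in $H^1$ for the $L^2$--subcritical range recalled in the introduction, together with the uniform $H^1$ bound furnished by the coercivity of $E$ on the constraint set.
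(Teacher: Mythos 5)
Your proposal is a plan rather than a proof, and the steps you defer are exactly the ones that constitute the theorem. You propose to characterise $\phi_c$ as a minimiser of the two-constraint problem $J=\inf\{E(u):M(u)=M(\phi_c),\,P(u)=P(\phi_c)\}$ and to run concentration--compactness, but you yourself concede that the strict inequality ruling out vanishing, the strict subadditivity ruling out dichotomy, and the identification of the minimiser with the endpoint soliton (rather than with some interior soliton $\phi_{\omega,c'}$ with $\omega>(c')^2/4$ sharing the same constraint values) are all left unestablished; none of these is routine here. Worse, there is a structural reason to doubt the route can be completed: the endpoint $\omega=c^2/4$ is precisely where the Hessian of $(\omega,c)\mapsto S_{\omega,c}(\phi_{\omega,c})$ degenerates, and in the Cazenave--Lions/Grillakis--Shatah--Strauss framework this degeneracy is the standard failure mode of two-constraint minimisation: the soliton need not be a constrained minimiser of $E$, and strict subadditivity of $(\mu,p)\mapsto J(\mu,p)$ is exactly what tends to break in the degenerate direction. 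So the ``main obstacle'' you flag is not a technical remainder to be filled in later --- it is the theorem.

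The paper avoids this entirely by a different characterisation. It realises $\phi_c$ as the minimiser of the single functional $S_c=E+cP+\frac{c^2}4M$ on the Nehari-type set $\{K_c=0\}$ (Lemma \ref{lemSphi}), using a profile decomposition in $\dot H^1\cap L^{2\sigma+2}$ precisely because no $L^2$ control is available in the zero-mass case; it then exploits the one sign condition that survives at the endpoint, namely $P(\phi_c)+\frac c2 M(\phi_c)>0$ and $\partial_c\bigl(P(\phi_c)+\frac c2 M(\phi_c)\bigr)>0$ (Lemma \ref{lemS''<0}), to run a two-sided perturbation of the speed parameter: the invariant sets $\mathcal A^\pm$ associated with the modified speeds $c\pm\lambda$ squeeze $L_c(u(t))$ between $\frac{\sigma+1}{\sigma}d(c\pm\lambda)$ from above and below, which controls $S_c(u(t))-d(c)$ and $K_c(u(t))$ uniformly in time (Lemmas \ref{leminva} and \ref{lem3}); closeness to the orbit in $\dot H^1$ then follows from the variational characterisation (Lemma \ref{lem2}), and the $L^2$ part is recovered from mass conservation. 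If you wish to salvage your approach you must actually prove the sharp inequalities you list and, in particular, show that no interior soliton attains the constraint values $(M(\phi_c),P(\phi_c))$ with strictly smaller energy; as written, the argument does not close.
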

We use a variational method to prove Theorem \ref{th1}, but it is not standard and the difficulty is from the ``zero mass" property of \eqref{eqphi} in this endpoint case.

The rest of the  paper is organized as follows. In section 2, we give a variational characterization of
solitary wave solutions. Then by a variational argument, we prove Theorem \ref{th1} in section 3.

\section{variational characterization}

In this section, we give a variational characterization of the solitary wave solution $\phi_c$ of \eqref{eqphi} defined by
\eqref{phi}.
Note that it is not standard because of the ``zero mass" of the equation \eqref{eqphi}. Our approach is inspired by Kwon, Wu \cite{k-w}.
In addition to \eqref{M}, \eqref{E} and \eqref{P}, we define several other variational functionals as follows:
\begin{align}\label{S}
S_c(u)(t)= &E(u)+cP(u)+\frac{c^2}4M(u)\\ \nonumber
=&\|\partial_xu\|_{L^2}^2+cIm\int_{\mathbb R}u\overline{\partial_xu}dx+\frac{c^2}4\|u\|_{L^2}^2-\frac1{\sigma+1}Im\int_{\mathbb R}|u|^{2\sigma}u\overline{\partial_xu}dx,
\end{align}
 \begin{equation}\label{K}
 K_c(u)(t)=\|\partial_xu\|_{L^2}^2+cIm\int_{\mathbb R}u\overline{\partial_xu}dx+\frac{c^2}4\|u\|_{L^2}^2-Im\int_{\mathbb R}|u|^{2\sigma}u\overline{\partial_xu}dx.
 \end{equation}
We also denote for convenience that
 \begin{equation}\label{L}
L_c(u)(t)=\|\partial_xu\|_{L^2}^2+cIm\int_{\mathbb R}u\overline{\partial_xu}dx+\frac{c^2}4\|u\|_{L^2}^2
 \end{equation}
and  \begin{equation}\label{N}
N(u)(t)=Im\int_{\mathbb R}|u|^{2\sigma}u\overline{\partial_xu}dx,
 \end{equation}
which imply that
$$S_c(u)=L_c(u)-\frac1{\sigma+1}N(u),\ \ K_c(u)=L_c(u)-N(u).$$

Using a standard argument as in  Berestycki and Lion  \cite{b-l}, we can obtain the uniqueness result as follows.
\begin{lemma}\label{lemuniq'}
If $\Psi\in H^1(\mathbb R)\setminus\{0\}$ is a solution of
\begin{equation}\label{eqPhi}
-\partial^2_{x}\Psi+\frac c2|\Psi|^{2\sigma}\Psi-\frac{2\sigma+1}{(2\sigma+2)^2}|\Psi|^{4\sigma}\Psi=0,
\ \ x\in\mathbb R,
\end{equation}
then there exists some $(\theta,x_0)$ such that $$\Psi(x)=e^{i\theta}\Phi_c(x-x_0)$$
with $$\Phi_c(x)=\left(\frac{2(\sigma+1)c}{\sigma^2(cx)^2+1}\right)^{\frac1{2\sigma}}.$$
\end{lemma}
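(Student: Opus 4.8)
The plan is to establish uniqueness of the solution to the ODE \eqref{eqPhi} by the classical shooting/phase-plane analysis of Berestycki and Lions, exploiting that the equation is autonomous and that positive solutions can be integrated explicitly. First I would observe that \eqref{eqPhi} is a nonlinear elliptic equation with no linear zeroth-order term (the ``zero mass'' feature), and that the nonlinearity
\[
g(s)=-\tfrac{c}{2}s^{2\sigma+1}+\tfrac{2\sigma+1}{(2\sigma+2)^2}s^{4\sigma+1}
\]
governs the problem. I would first reduce to the case of a real, positive, even solution: for a complex $H^1$ solution $\Psi$, since the equation involves only $|\Psi|$, one multiplies by $\overline{\Psi}$ and integrates to show the phase is constant, so $\Psi=e^{i\theta}|\Psi|$ for some fixed $\theta\in\mathbb{R}$, reducing matters to the real equation for $|\Psi|$.

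Next I would invoke the Berestycki--Lions framework: writing $F(s)=\int_0^s g(r)\,dr$, any finite-energy solution decaying at infinity must satisfy the first-integral relation
\[
\tfrac12(\Phi')^2 = F(\Phi),
\]
obtained by multiplying the equation by $\Phi'$ and integrating, using $\Phi,\Phi'\to 0$ at $\pm\infty$. The key structural facts I would verify are that $F(s)<0$ for small $s>0$, that $F$ has a unique positive zero at some level $s_0$ where the solution attains its maximum, and that $g(s_0)>0$ there, which are exactly the Berestycki--Lions conditions guaranteeing existence and, crucially for us, uniqueness up to translation of the ground state. Monotonicity on each side of the maximum then allows one to separate variables, and the profile is determined by
\[
x-x_0=\pm\int_{\Phi(x)}^{s_0}\frac{ds}{\sqrt{2F(s)}},
\]
which pins down $\Phi$ uniquely as the explicit algebraic-type profile $\Phi_c(x)=\bigl(\tfrac{2(\sigma+1)c}{\sigma^2(cx)^2+1}\bigr)^{1/2\sigma}$ after carrying out the integration.

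I expect the main obstacle to lie precisely in the zero-mass structure. In the usual Berestycki--Lions setting one has a genuinely massive term $(\omega-c^2/4)\Phi$ forcing exponential decay and making the ODE analysis at infinity routine; here that term vanishes, so solutions decay only algebraically (like $|x|^{-1/\sigma}$), and one must work harder to justify that any $H^1$ solution actually decays to zero at infinity and has vanishing derivative there, so that the first-integral constant is zero. I would handle this by a separate decay argument—using the $H^1$ membership together with the equation to upgrade to decay of $\Phi$ and $\Phi'$—before applying the phase-plane reduction. Once decay is secured, the separation-of-variables argument gives uniqueness cleanly; verifying that the explicit profile indeed solves the ODE and matches the integral is then a direct but routine computation, which I would only sketch rather than grind through.
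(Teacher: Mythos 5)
Your proposal is essentially the paper's approach: the paper gives no proof of this lemma at all, simply invoking ``a standard argument as in Berestycki and Lions,'' and your sketch is precisely that standard argument (reduction to a real positive profile, the first integral forced by $\Phi,\Phi'\to 0$, and separation of variables yielding the explicit algebraic profile). Two small corrections: with your $F$ the first integral reads $\tfrac12(\Phi')^2=-F(\Phi)$ and the quadrature involves $\sqrt{-2F(s)}$, consistent with your (correct) requirement that $F<0$ on $(0,s_0)$ — and the phase reduction comes from constancy (hence vanishing) of $\mathrm{Im}(\overline{\Psi}\,\partial_x\Psi)$ rather than from multiplying by $\overline{\Psi}$ and integrating; moreover, in one dimension the vanishing of the integration constant follows directly from $\Phi\to0$ and $\Phi'\in L^2$, so the zero-mass decay issue you flag is not a genuine obstruction here.
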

The difficulty is that we have no $L^2$-control from $L_c(u)$. A counterpart result is as follows.
\begin{lemma}\label{lemuniq}
If $\psi\in H^1(\mathbb R)\setminus\{0\}$ is a solution of
\eqref{eqphi}
then there exists some $(\theta,x_0)$ such that $$\psi(x)=e^{i\theta}\phi_c(x-x_0).$$
\end{lemma}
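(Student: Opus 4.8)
The plan is to reduce the complex equation \eqref{eqphi} to the real profile equation appearing in Lemma \ref{lemuniq'}, via the gauge (polar) decomposition, and then to invoke that lemma. First I would record the two scalar identities obtained by pairing \eqref{eqphi} with $\bar\psi$. Taking the imaginary part, using $\mathrm{Im}(\bar\psi\psi'')=\tfrac{d}{dx}\mathrm{Im}(\bar\psi\psi')$ (since $\bar\psi'\psi'=|\psi'|^2$ is real) together with $\mathrm{Re}(\bar\psi\psi')=\tfrac12(|\psi|^2)'$, the equation collapses after one integration to the momentum identity
$$\mathrm{Im}(\bar\psi\psi')=\frac c2|\psi|^2-\frac1{2\sigma+2}|\psi|^{2\sigma+2}+C.$$
Since $\psi\in H^1$ forces $\bar\psi\psi'\in L^1(\mathbb R)$ while the two nonlinear terms tend to $0$ at infinity, the integration constant must vanish, $C=0$.

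Next I would pass to polar form $\psi=\rho e^{i\Theta}$ with $\rho=|\psi|$ on $\{\psi\neq0\}$. The momentum identity becomes $\rho^2\Theta'=\tfrac c2\rho^2-\tfrac1{2\sigma+2}\rho^{2\sigma+2}$, hence $\Theta'=\tfrac c2-\tfrac1{2\sigma+2}\rho^{2\sigma}$ wherever $\rho>0$. Taking instead the real part of the paired equation and substituting this expression for $\Theta'$, a direct computation shows that the $c^2$-terms cancel and that $\rho$ solves precisely the equation of Lemma \ref{lemuniq'}, namely
$$-\rho''+\frac c2\rho^{2\sigma+1}-\frac{2\sigma+1}{(2\sigma+2)^2}\rho^{4\sigma+1}=0.$$
Since $\rho=|\psi|\in H^1(\mathbb R)\setminus\{0\}$ with $\rho\geq0$, Lemma \ref{lemuniq'} yields $\rho(x)=\Phi_c(x-x_0)$ for some $x_0\in\mathbb R$ (the phase in that lemma being forced to a positive real because $\Phi_c>0$).

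It then remains to reconstruct the phase. With $\rho=\Phi_c(\cdot-x_0)$ known and strictly positive everywhere, $\psi$ never vanishes, so $\Theta$ is globally well defined and $\Theta'=\tfrac c2-\tfrac1{2\sigma+2}\Phi_c(\cdot-x_0)^{2\sigma}$ holds on all of $\mathbb R$. Integrating and comparing with the definition \eqref{phi} of $\phi_c$, whose phase has exactly this derivative by the fundamental theorem of calculus, the two phases differ only by an additive constant $\theta$, which gives $\psi=e^{i\theta}\phi_c(\cdot-x_0)$, as claimed.

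The main obstacle is the ``zero mass'' degeneracy noted before the statement: the linear part of \eqref{eqphi} has symbol $(\xi-\tfrac c2)^2$, which vanishes, so solutions decay only polynomially and one cannot discard the integration constant $C$ by an exponential-decay argument; instead one must exploit the integrability $\bar\psi\psi'\in L^1(\mathbb R)$ furnished by $\psi\in H^1$. A second delicate point is making the polar reduction rigorous across the possible zeros of $\psi$: I would first establish, by elliptic (ODE) regularity, that $\psi\in C^2$, and then argue that the real equation for $\rho=|\psi|$ holds in the $H^1$ sense on all of $\mathbb R$, so that Lemma \ref{lemuniq'} genuinely applies; the classification it provides then guarantees a posteriori that $\rho=\Phi_c(\cdot-x_0)>0$, removing the ambiguity.
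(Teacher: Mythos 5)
The paper itself states this lemma without proof (it is presented merely as a ``counterpart'' of Lemma \ref{lemuniq'}), so there is no in-paper argument to compare against; your strategy of reducing to Lemma \ref{lemuniq'} via the momentum identity and a gauge/polar decomposition is exactly the natural route, and your computations check out: the vanishing of the integration constant via $\bar\psi\psi'\in L^1(\mathbb R)$, the cancellation of the $c^2$-terms after substituting $\Theta'=\tfrac c2-\tfrac1{2\sigma+2}\rho^{2\sigma}$, and the phase reconstruction against \eqref{phi} are all correct.

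There is, however, a genuine gap at the step you yourself flag as delicate: the assertion that ``the real equation for $\rho=|\psi|$ holds in the $H^1$ sense on all of $\mathbb R$.'' A priori a nontrivial $H^1$ solution of \eqref{eqphi} could vanish somewhere; by ODE uniqueness every such zero is simple, and at a simple zero $\rho=|\psi|$ has a corner, so $\rho''$ picks up a positive Dirac mass and $\rho$ does \emph{not} satisfy the profile equation distributionally across that point. The a posteriori remark that the classification forces $\rho=\Phi_c(\cdot-x_0)>0$ is circular, because Lemma \ref{lemuniq'} can only be invoked once $\rho$ is known to solve the equation globally. The clean fix is to gauge-transform the full complex function rather than its modulus: set
$$\Psi(x)=\psi(x)\exp\Bigl(-\tfrac{ic}{2}x+\tfrac{i}{2\sigma+2}\int_0^x|\psi(y)|^{2\sigma}\,dy\Bigr),$$
which is globally defined and $C^2$ irrespective of the zero set of $\psi$, and satisfies $|\Psi|=|\psi|$. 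Substituting \eqref{eqphi} and your momentum identity (with $C=0$) into the expression for $\Psi''$ produces, by the same cancellations you carried out in polar coordinates, the \emph{complex-valued} equation of Lemma \ref{lemuniq'}; since that lemma is stated for complex $H^1$ solutions, it applies directly and yields $\Psi=e^{i\theta_1}\Phi_c(\cdot-x_0)$, whence $\psi$ never vanishes and undoing the gauge gives $\psi=e^{i\theta}\phi_c(\cdot-x_0)$. With this modification your argument is complete.
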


We consider the following minimization problem:
\begin{align}\label{d}
d(c)=\inf\{S_c(u): u\in H^1(\mathbb R)\setminus\{0\}, K_c(u)=0\}.
\end{align}
Letting
$$ \tilde L_c(f):=L_c(e^{\frac c2ix}f),\ \tilde N_c(f):=N(e^{\frac c2ix}f),$$
then
$$ \tilde L_c(f)=\|\partial_xf\|_{L^2}^2,\ \tilde N_c(f)
=-\frac c2\|f\|_{L^{2\sigma+2}}^{2\sigma+2}+ Im\int_{\mathbb R}
|f|^{2\sigma}f\overline{\partial_x f}dx,$$ and
$$\tilde S_c(f):=S_c(e^{\frac c2ix}f)=\tilde L_c(f)-\frac1{\sigma+1}\tilde N_c(f),$$
$$\tilde K_c(f):=K_c(e^{\frac c2ix}f)=\tilde L_c(f)-\tilde N_c(f).$$
Hence, equivalently, we have
\begin{align}\label{dtilde}
d(c)=\inf\{\tilde S_c(v): v\in H^1(\mathbb R)\setminus\{0\}, \tilde K_c(v)=0\}.
\end{align}

We need the following result to give the characterization of $d(c)$.
\begin{lemma}\label{lemkgeq0}
Assume that $f\in H^1(\mathbb R)\setminus\{0\}$ satisfies $$\|\partial_xf\|_{L^2(\mathbb R)}^2\leq\frac{\sigma+1}\sigma d(c),$$
then $\tilde K_c(f)\geq0.$
\end{lemma}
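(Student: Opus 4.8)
The plan is to argue by contraposition: I will show that if $\tilde K_c(f)<0$ then $\|\partial_x f\|_{L^2}^2>\frac{\sigma+1}{\sigma}d(c)$, which is exactly the negation of the conclusion. The starting point is to play the two functionals against each other. Since $\tilde S_c=\tilde L_c-\frac1{\sigma+1}\tilde N_c$ and $\tilde K_c=\tilde L_c-\tilde N_c$, eliminating $\tilde N_c$ gives
$$\tilde S_c(v)=\frac{\sigma}{\sigma+1}\tilde L_c(v)+\frac1{\sigma+1}\tilde K_c(v).$$
In particular, on the admissible set $\{\tilde K_c(v)=0\}$ one has $\tilde S_c(v)=\frac{\sigma}{\sigma+1}\|\partial_x v\|_{L^2}^2$, so the minimization problem \eqref{dtilde} is equivalent to
$$\frac{\sigma+1}{\sigma}\,d(c)=\inf\{\|\partial_x v\|_{L^2}^2: v\in H^1(\mathbb R)\setminus\{0\},\ \tilde K_c(v)=0\}.$$
Thus every admissible $v$ satisfies $\|\partial_x v\|_{L^2}^2\ge\frac{\sigma+1}{\sigma}d(c)$, and it suffices to produce, from a given $f$ with $\tilde K_c(f)<0$, an admissible function whose Dirichlet energy is strictly smaller than that of $f$.

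The scaling to use is the pure dilation $f_\lambda(x):=f(x/\lambda)$, $\lambda>0$, which leaves the amplitude untouched; note that the equation's own scaling is \emph{not} homogeneous on $\tilde N_c$ (its two terms scale with different powers) and would be of no help here. A direct change of variables gives $\tilde L_c(f_\lambda)=\lambda^{-1}\|\partial_x f\|_{L^2}^2$ and $\|f_\lambda\|_{L^{2\sigma+2}}^{2\sigma+2}=\lambda\|f\|_{L^{2\sigma+2}}^{2\sigma+2}$, while $\mathrm{Im}\int|f_\lambda|^{2\sigma}f_\lambda\overline{\partial_x f_\lambda}\,dx=\mathrm{Im}\int|f|^{2\sigma}f\overline{\partial_x f}\,dx$ is scale-invariant. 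Writing $A:=\|\partial_x f\|_{L^2}^2>0$, $P:=\|f\|_{L^{2\sigma+2}}^{2\sigma+2}>0$ and $I:=\mathrm{Im}\int|f|^{2\sigma}f\overline{\partial_x f}\,dx$, this yields
$$g(\lambda):=\tilde K_c(f_\lambda)=\frac{A}{\lambda}+\frac{c}{2}P\lambda-I.$$

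Now the endpoint hypothesis $c>0$ enters. The coefficient $\frac{c}{2}P$ is strictly positive, so $g(\lambda)\to+\infty$ as $\lambda\to+\infty$; moreover $g(1)=\tilde K_c(f)<0$ by assumption. By the intermediate value theorem there is some $\lambda_*>1$ with $g(\lambda_*)=0$, i.e. $f_{\lambda_*}$ is admissible. Consequently $\|\partial_x f_{\lambda_*}\|_{L^2}^2=A/\lambda_*\ge\frac{\sigma+1}{\sigma}d(c)$, and since $\lambda_*>1$,
$$\|\partial_x f\|_{L^2}^2=A=\lambda_*\,\|\partial_x f_{\lambda_*}\|_{L^2}^2>\|\partial_x f_{\lambda_*}\|_{L^2}^2\ge\frac{\sigma+1}{\sigma}d(c).$$
This is the desired strict inequality, and taking the contrapositive completes the proof.

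The only delicate point is precisely where the endpoint hypothesis is used: it is the positivity of $c$ (hence of the $L^{2\sigma+2}$-coefficient in $g$) that forces $g(\lambda)\to+\infty$ and guarantees a root $\lambda_*>1$, on the correct side of $1$ so that the Dirichlet energy \emph{decreases}. Were that coefficient non-positive, the dilation could fail to meet the constraint from above and the ``zero mass'' obstruction flagged after \eqref{eqphi} would resurface; everything else is a routine change of variables and an application of the reduced characterization of $d(c)$ established in the first paragraph.
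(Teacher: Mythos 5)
Your proof is correct, and it follows the same overall strategy as the paper: reduce the problem to the observation that $\tilde S_c=\frac{\sigma}{\sigma+1}\tilde L_c+\frac{1}{\sigma+1}\tilde K_c$, so that on the constraint set $\{\tilde K_c=0\}$ one has $\|\partial_x v\|_{L^2}^2\geq\frac{\sigma+1}{\sigma}d(c)$, and then deform a function with $\tilde K_c(f)<0$ along a one-parameter family until it lands on the constraint set with strictly smaller Dirichlet energy. The only substantive difference is the choice of deformation: you use the pure dilation $f_\lambda(x)=f(x/\lambda)$ with $\lambda_*>1$, under which the three terms of $\tilde K_c$ scale as $\lambda^{-1}$, $\lambda$, $\lambda^0$, whereas the paper uses the equation's scaling $f_\gamma(x)=\gamma^{1/\sigma}f(\gamma x)$ with $\gamma\in(0,1)$, under which $\tilde K_c(f_\gamma)=\gamma^{\frac{2}{\sigma}+1}\bigl[A+\frac{c}{2}P-\gamma I\bigr]$ in your notation. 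Both deformations work for the same reason (the cubic-derivative term scales differently from the other two, and $c>0$ keeps the remaining coefficients positive), so your parenthetical remark that the equation's own scaling ``would be of no help here'' is mistaken --- that inhomogeneity is precisely what the paper exploits, and its $\gamma<1$ plays the role of your $\lambda_*>1$ in making the Dirichlet norm drop. Your contrapositive phrasing versus the paper's contradiction is immaterial.
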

\begin{proof}
We argue by contradiction to assume that there exists some $f\in H^1(\mathbb R)\setminus\{0\}$ such that $\tilde K_c(f)<0$.
Then, there exists some $\gamma\in(0,1)$ such that $\tilde K_c(f_\gamma)=0$, with $f_\gamma(x)=\gamma^{\frac1\sigma}f(\gamma x)$.
Indeed, since
$$\tilde K_c(f_\gamma)=\gamma^{\frac2\sigma+1}\left[\|\partial_xf\|_{L^2}^2+\frac c2\|f\|_{L^{2\sigma+2}}^{2\sigma+2}-\gamma Im\int_{\mathbb R}
|f|^{2\sigma}f\overline{\partial_x f}dx\right],$$
$\tilde K_c(f)<0$ implies that we may choose $$\gamma=\frac{\|\partial_xf\|_{L^2}^2+\frac c2\|f\|_{L^{2\sigma+2}}^{2\sigma+2}}{Im\int_{\mathbb R}
|f|^{2\sigma}f\overline{\partial_x f}dx}<1$$
to get $\tilde K_c(f_\gamma)=0$.
Therefore, by definition of $d(c)$ in \eqref{dtilde}, $\tilde S_c(f_\gamma)\geq d(c)$, which gives then
$$\|\partial_xf_\gamma\|_{L^2}^2=\frac{\sigma+1}\sigma\left(\tilde S_c(f_\gamma)-\frac1{\sigma+1}\tilde K_c(f_\gamma)\right)\geq\frac{\sigma+1}\sigma d(c),$$
or $ \gamma^{\frac2\sigma+1}\|\partial_xf\|_{L^2}^2\geq\frac{\sigma+1}\sigma d(c).$
Since $\gamma<1$, this contradicts the assumption $\|\partial_xf\|_{L^2(\mathbb R)}^2\leq\frac{\sigma+1}\sigma d(c)$
and we conclude the proof.
\end{proof}

Now we give the characterization of $d(c)$.
\begin{lemma}\label{lemSphi}
It holds that
$$S_c(\phi_c)=d(c)$$
with $\phi_c$ defined by \eqref{phi} with $w=c^2/4$.
\end{lemma}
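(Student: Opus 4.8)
The plan is to establish the two inequalities $d(c)\le S_c(\phi_c)$ and $d(c)\ge S_c(\phi_c)$. The first is the easy direction and only requires that $\phi_c$ be admissible for \eqref{d}, that is $K_c(\phi_c)=0$. To see this I would pair \eqref{eqphi} with $\overline{\phi_c}$ in $L^2(\mathbb R)$ and take real parts: the contributions of $-\partial_x^2\phi_c$, $\frac{c^2}4\phi_c$, $ic\partial_x\phi_c$ and $-i|\phi_c|^{2\sigma}\partial_x\phi_c$ are precisely $\|\partial_x\phi_c\|_{L^2}^2$, $\frac{c^2}4\|\phi_c\|_{L^2}^2$, $c\,Im\int\phi_c\overline{\partial_x\phi_c}\,dx$ and $-N(\phi_c)$, whose sum equals $L_c(\phi_c)-N(\phi_c)=K_c(\phi_c)$. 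Since $\phi_c$ solves \eqref{eqphi} this sum vanishes, so $\phi_c$ lies in the constraint set of \eqref{d} and $d(c)\le S_c(\phi_c)$.

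For the reverse inequality I would pass to the gauge-transformed problem \eqref{dtilde}: on the constraint $\tilde K_c(v)=0$ one has $\tilde N_c(v)=\tilde L_c(v)$, hence $\tilde S_c(v)=\frac{\sigma}{\sigma+1}\|\partial_x v\|_{L^2}^2$, so a minimizing sequence is bounded in $\dot H^1(\mathbb R)$ but, because of the ``zero mass'', carries no $L^2$ bound. To recover compactness I would reduce to a real problem: writing $v=\rho e^{i\psi}$ and optimizing the phase pointwise (the optimum is $\partial_x\psi=-\rho^{2\sigma}/(2(\sigma+1))$, exactly the phase appearing in \eqref{phi}) lowers $\tilde S_c$ to $\int(\partial_x\rho)^2+\frac{c}{2(\sigma+1)}\rho^{2\sigma+2}-\frac{1}{4(\sigma+1)^2}\rho^{4\sigma+2}\,dx$ while driving $\tilde K_c$ to a value $\le0$; rescaling by $\rho\mapsto\gamma^{1/\sigma}\rho(\gamma\cdot)$ with $\gamma\le1$, as in the proof of Lemma~\ref{lemkgeq0}, restores the constraint without increasing the energy. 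This gives an exact identification of $d(c)$ with the real minimization of the reduced energy subject to the reduced constraint. After a symmetric decreasing rearrangement of $\rho$ the minimizing sequence may be taken even and decreasing, and then the $\dot H^1$ and $L^{2\sigma+2}$ bounds force pointwise decay and equicontinuity, yielding a nontrivial limit.

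This limit is a minimizer, and its Euler–Lagrange equation, after absorbing the Lagrange multiplier by a scaling, is exactly the equation in Lemma~\ref{lemuniq'}; that lemma then identifies it with $\Phi_c(\cdot-x_0)$, so the associated optimal-phase function $v_*$ is the gauge transform of $e^{i\theta}\phi_c(\cdot-x_0)$ (equivalently one undoes the gauge and applies Lemma~\ref{lemuniq} directly). Since $S_c$ is invariant under phase rotation and translation, $S_c$ of the minimizer equals $S_c(\phi_c)$, giving $d(c)\ge S_c(\phi_c)$ and hence the asserted equality $d(c)=S_c(\phi_c)$. The main obstacle throughout is the existence of a minimizer: the absence of any $L^2$ control leaves the constraint set bounded only in $\dot H^1$, and the decisive point is precisely the phase-optimization together with rearrangement, which converts the $\dot H^1$ bound into genuine compactness and ties the problem to the uniqueness statement of Lemma~\ref{lemuniq'}.
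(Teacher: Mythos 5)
Your architecture (admissibility of $\phi_c$, then attainment of the infimum and identification via the uniqueness lemma) is reasonable, and your verification that $K_c(\phi_c)=0$ by pairing \eqref{eqphi} with $\overline{\phi_c}$ is correct. The genuine gap is in the reduction to a real problem by pointwise phase optimization --- precisely the step you call decisive. Write $v=\rho e^{i\psi}$ with $\partial_x\psi=-t\rho^{2\sigma}$; then
\begin{equation*}
\tilde K_c(\rho e^{i\psi})=B+(t^2-t)D,\qquad \tilde S_c(\rho e^{i\psi})=B'+\Bigl(t^2-\tfrac{t}{\sigma+1}\Bigr)D,
\end{equation*}
where $B=\|\partial_x\rho\|_{L^2}^2+\tfrac c2\|\rho\|_{L^{2\sigma+2}}^{2\sigma+2}\ge0$, $B'=\|\partial_x\rho\|_{L^2}^2+\tfrac{c}{2(\sigma+1)}\|\rho\|_{L^{2\sigma+2}}^{2\sigma+2}$, $D=\|\rho\|_{L^{4\sigma+2}}^{4\sigma+2}$. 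Your phase $t^*=\tfrac1{2(\sigma+1)}$ minimizes $\tilde S_c$ but yields $\tilde K_c=B-\tfrac{2\sigma+1}{4(\sigma+1)^2}D$. From $\tilde K_c(v)=0$ one only gets $B\le D/4$ (Cauchy--Schwarz in the cross term), hence $\tilde K_c(\rho e^{i\psi^*})\le\tfrac{\sigma^2}{4(\sigma+1)^2}D$, which is positive in general (e.g.\ when the original phase has $t_0=\tfrac12$ and $B=D/4$). So the constraint is pushed to the \emph{wrong} side, and your rescaling with $\gamma\le1$ cannot restore it: you would need $\gamma>1$, which increases the energy. The claimed ``exact identification'' of $d(c)$ with the reduced real minimization has the same two-sided defect, since the phase minimizing $\tilde S_c$ and the phase minimizing $\tilde K_c$ differ by the factor $\sigma+1$; restoring $\tilde K_c=0$ costs $\tilde S_c$ an amount proportional to $(t_--\tfrac1{2(\sigma+1)})^2D$, which vanishes only at the putative minimizer. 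The reduction can likely be repaired by instead taking $t=t_-$, the smaller root of $t^2-t+B/D=0$ (one can check $\tilde K_c=0$ and $\|\partial_x(\rho e^{i\psi_{t_-}})\|_{L^2}\le\|\partial_x v\|_{L^2}$), but as written the argument does not close. You also never establish $d(c)>0$, which is needed to rule out a vanishing limit.

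For comparison, the paper keeps the problem complex and never optimizes the phase: compactness of the minimizing sequence comes from a profile decomposition in $\dot H^1\cap L^{2\sigma+2}$, with Lemma \ref{lemkgeq0} forcing $\tilde K_c(V^j)\ge0$ for every profile, hence $\tilde K_c(V^j)=0$, hence each nonzero profile carries at least $\tfrac{\sigma+1}{\sigma}d(c)$ of the $\dot H^1$ mass, so exactly one survives; the Lagrange multiplier is then shown to vanish and Lemma \ref{lemuniq} identifies the minimizer, giving $d(c)=S_c(\phi_c)$ in one stroke. Your endgame (Euler--Lagrange plus uniqueness) coincides with the paper's, but the compactness mechanism is where your version breaks.
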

\begin{proof}
We first claim that $$d(c)>0.$$

Indeed, considering
\begin{align*}
\tilde S_c(v)-\frac1{\sigma+1}\tilde K_c(v)
=(1-\frac1{\sigma+1})\tilde L_c(v)
=(1-\frac1{\sigma+1})\|\partial_xv\|_{L^2}^2\geq0.
\end{align*}
Now, if $d(c)=0$, then we obtain some minimizing sequence $\{v_n\}\subset H^1(\mathbb R)\setminus\{0\}$, such that
$$\tilde S_c(v_n)\rightarrow0, \ \ and\ \ \tilde K_c(v_n)=0.$$
Thus, we have $\|\partial_xv_n\|_{L^2}^2\rightarrow0$ and
$$\tilde N_c(v_n)\rightarrow0.$$
that is
\begin{equation}\label{e1}
\int_{\mathbb R}\left(\frac c2|v_n|^{2\sigma+2}-Im|v_n|^{2\sigma}v_n\overline{\partial_xv_n}\right)dx\rightarrow0.
\end{equation}
Note that
\begin{align}\label{e2}
&\left|Im\int_{\mathbb R}|v_n|^{2\sigma}v_n\overline{\partial_xv_n}dx\right|
\leq\|\partial_xv_n\|_{L^2}\|v_n\|^{2\sigma+1}_{L^{4\sigma+2}}\\ \nonumber
&\leq\|\partial_xv_n\|_{L^2}^\theta\cdot\|v_n\|^{2\sigma+2-\theta}_{L^{2\sigma+2}}
\leq\frac c4\|v_n\|^{2\sigma+2}_{L^{2\sigma+2}}+A\|\partial_xv_n\|_{L^2}^{2\sigma+2}
\end{align}
with some $\theta\in(1,2\sigma+2)$ and some constants $A>0$.
From \eqref{e1},\eqref{e2} and $\|\partial_xv_n\|_{L^2}^2\rightarrow0$ ,  we obtain that
$$\|v_n\|^{2\sigma+2}_{L^{2\sigma+2}}\rightarrow0$$
which gives by interpolation that
\begin{equation}\label{infty0}
\|v_n\|_{L^\infty}\rightarrow0,\ as\ n\rightarrow\infty.
\end{equation}
Using \eqref{infty0}, we obtain that
\begin{align}\label{2sigma+2}
0=K_c(u_n)&=\|\partial_xv_n\|_{L^2}^2+\frac c2\int_{\mathbb R}|v_n|^{2\sigma+2}dx
-Im\int_{\mathbb R}|v_n|^{2\sigma}v_n\overline{\partial_xv_n}dx\\ \nonumber
&\geq\|\partial_xv_n\|_{L^2}^2+\frac c2\int_{\mathbb R}|v_n|^{2\sigma+2}dx
-\frac12\|\partial_xv_n\|_{L^2}^2-\frac 12\int_{\mathbb R}|v_n|^{4\sigma+2}dx\\ \nonumber
&=\frac12\|\partial_xv_n\|_{L^2}^2+\int_{\mathbb R}|v_n|^{2\sigma+2}(\frac c2-\frac12|v_n|^{2\sigma})dx\\ \nonumber
&\geq\frac12\|\partial_xv_n\|_{L^2}^2+\frac c4\int_{\mathbb R}|v_n|^{2\sigma+2}\geq0,
\end{align}
which gives $v_n=0$. This is a contradiction and gives the claim $d(c)>0$.

Next, let  $\{v_n\}\subset H^1(\mathbb R)\setminus\{0\}$ be the minimizing sequence such that as  $n\rightarrow\infty$,
$$\tilde S_c(v_n)=\|\partial_xv_n\|_{L^2}^2+\frac1{\sigma+1}\left(\frac c2\int_{\mathbb R}|v_n|^{2\sigma+2}dx
-Im\int_{\mathbb R}|v_n|^{2\sigma}v_n\overline{\partial_xv_n}dx\right)\rightarrow d(c),$$ and
$$\tilde K_c(v_n)=\|\partial_xv_n\|_{L^2}^2+\frac c2\int_{\mathbb R}|v_n|^{2\sigma+2}dx
-Im\int_{\mathbb R}|v_n|^{2\sigma}v_n\overline{\partial_xv_n}dx=0.$$
There holds then $\|\partial_xv_n\|_{L^2}^2\rightarrow\frac{\sigma+1}\sigma d(c)$.
Moreover, by a similar argument of \eqref{2sigma+2},  there exists some absolute constant $C>0$ such that
$$\|v_n\|_{L^{2\sigma+2}}\leq C.$$
Now we apply the profile decomposition to the uniformly bounded sequence $\{v_n\}$ in $\dot{H}^1(\mathbb R)\bigcap L^{2\sigma+2}(\mathbb R)$
to obtain that there exist some sequences $\{V^j\}_{j=1}^\infty$ and $\{x_n^j\}_{n,j=1}^\infty$ such that,
up to some supsequence, for each $L\geq1$,
\begin{align}
v_n=\Sigma_{j=1}^{L}V^j(\cdot-x_n^j)+R_n^L
\end{align}
with $$\forall k\neq j,\ \ |x_n^j-x_n^k|\rightarrow\infty,\ \ as\ \ n\rightarrow\infty$$
and
\begin{align}\label{orth1}
\lim_{L\rightarrow\infty}\left[\lim_{n\rightarrow\infty}\|R_n^L\|_{L^q(\mathbb R)}\right]=0,\  \ \forall q>2\sigma+2.
\end{align}
Moreover,
\begin{align}\label{orth2}
\|\partial_xv_n\|_{L^2}^2=\Sigma_{j=1}^{L}\|\partial_xV^j\|_{L^2}^2+\|\partial_xR_n^L\|_{L^2}^2+o_n(1),
\end{align}
\begin{align}
\tilde S_c(v_n)=\Sigma_{j=1}^{L}\tilde S_c(V^j)+\tilde S_c(R_n^L)+o_n(1),
\end{align}
and
\begin{align}\label{orth3}
\tilde K_c(v_n)=\Sigma_{j=1}^{L}\tilde K_c(V^j)+\tilde K_c(R_n^L)+o_n(1).
\end{align}
Thus by Lemma \ref{lemkgeq0}, $\tilde K_c(V^j)=K_c(e^{\frac c2ix}V^j)\geq0$ or $V^j=0$.
Since also $\tilde K_c(R_n^L)\geq0$, from \eqref{orth3} and $\tilde K(v_n)=0$, we get that
for any $j=1,2,\cdots,L$ there must hold that $\tilde K_c(V^j)=0,$ which means that $\tilde S_c(V^j)\geq d(c)$ or $V^j=0$.
Now from \eqref{orth2}, there exists only one $j$, say $j=1$, such that
$\tilde S_c(V^1)=d(c)$ and $V^j=0$ for $j=2,\cdots, L$.
Hence, $V^1$ is the function such that
$$\tilde S_c(V^1)=d(c),\ \ \tilde K_c(V^1)=0.$$
Then there exists some Lagrange constant $\rho$ such that
$$\tilde S'_c(V^1)=\rho \tilde K'_c(V^1),$$ which implies
$$\langle \tilde S'_c(V^1),V^1\rangle=\rho\langle \tilde K'_c(V^1),V^1\rangle.$$
That is to say
$$(\rho(\sigma+1)-1)\tilde K_c(V^1)=\rho\sigma\|\partial_xV^1\|_{L^2}^2,$$ which implies that $\rho=0$.
Therefore, we have that $$\tilde S'_c(V^1)=0,\ \ or\ \ S'_c(e^{\frac c2ix}V^1)=0$$
and then $e^{\frac c2ix}V^1(x)$ solves the equation \eqref{eqphi}.
Hence by Lemma \ref{lemuniq},
$$e^{\frac c2ix}V^1(x)=e^{i\theta}\phi_c(x-x_0).$$
It follows that $$d(c)=\tilde S_c(V^1)=S_c(e^{i\theta}\phi_c(\cdot-x_0))=S_c(\phi_c).$$
\end{proof}

Finally in this section, we prove the following lemma, which is useful to show our main result.
\begin{lemma}\label{lemS''<0}\footnote{Due to a private discussion with Cui Ning.}
There hold that
\begin{align}\label{S''}
P(\phi_c)+\frac c2M(\phi_c)>0, \ \ \partial_cP(\phi_c)+\frac c2\partial_cM(\phi_c)>0.
\end{align}
\end{lemma}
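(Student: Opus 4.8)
The plan is to reduce everything to explicit integrals of $\Phi_c$. First I would write $\phi_c=\Phi_c e^{i\Theta}$ with phase $\Theta(x)=\frac c2 x-\frac1{2\sigma+2}\int_0^x\Phi_c(y)^{2\sigma}\,dy$, so that $\Theta'=\frac c2-\frac1{2\sigma+2}\Phi_c^{2\sigma}$ and, since $\Phi_c$ is real, $\mathrm{Im}\big(\phi_c\overline{\partial_x\phi_c}\big)=-\Theta'\Phi_c^2$. This gives at once
\[
P(\phi_c)=-\frac c2\int_{\mathbb{R}}\Phi_c^2\,dx+\frac1{2\sigma+2}\int_{\mathbb{R}}\Phi_c^{2\sigma+2}\,dx,
\]
and because $M(\phi_c)=\int_{\mathbb{R}}\Phi_c^2$ the combination telescopes to
\[
P(\phi_c)+\frac c2 M(\phi_c)=\frac1{2(\sigma+1)}\int_{\mathbb{R}}\Phi_c^{2\sigma+2}\,dx>0,
\]
which is the first inequality, positive simply because $\Phi_c>0$.

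For the second inequality I would set $Q(c):=P(\phi_c)+\frac c2 M(\phi_c)$, the quantity just computed. Differentiating the identity $Q(c)=P(\phi_c)+\frac c2 M(\phi_c)$ in $c$ and solving for the wanted combination yields $\partial_c P(\phi_c)+\frac c2\partial_c M(\phi_c)=Q'(c)-\frac12 M(\phi_c)$. The key simplification is the scaling identity $\Phi_c(x)=c^{1/(2\sigma)}\Phi_1(cx)$, which follows by inspection from the explicit formula for $\Phi_c$. Substituting $y=cx$ turns both relevant integrals into pure powers of $c$: with $M_1=\int_{\mathbb{R}}\Phi_1^2$ and $J_1=\int_{\mathbb{R}}\Phi_1^{2\sigma+2}$ one finds $M(\phi_c)=M_1\,c^{(1-\sigma)/\sigma}$ and $Q(c)=\frac{J_1}{2(\sigma+1)}c^{1/\sigma}$, whence
\[
\partial_c P(\phi_c)+\frac c2\partial_c M(\phi_c)=Q'(c)-\frac12 M(\phi_c)=\frac{c^{(1-\sigma)/\sigma}}2\Big(\frac{J_1}{\sigma(\sigma+1)}-M_1\Big),
\]
so the claim reduces to the $c$-independent inequality $J_1>\sigma(\sigma+1)M_1$.

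It remains to evaluate $J_1/M_1$. With the substitution $u=\sigma x$ both $J_1$ and $M_1$ become explicit constants times the Beta integrals $\int_{\mathbb{R}}(1+u^2)^{-p}\,du=\sqrt\pi\,\Gamma(p-\tfrac12)/\Gamma(p)$, taken at $p=(\sigma+1)/\sigma$ for $J_1$ and at $p=1/\sigma$ for $M_1$ (both finite since $\sigma\in(0,1)$ forces $1/\sigma>\tfrac12$). Collecting prefactors gives $J_1/M_1=2(\sigma+1)\,I_1/I_2$ with $I_1,I_2$ the two Beta integrals; writing $t=1/\sigma$ and using $\Gamma(t+1)=t\Gamma(t)$ together with $\Gamma(t+\tfrac12)=(t-\tfrac12)\Gamma(t-\tfrac12)$, the Gamma factors collapse to $I_1/I_2=(2-\sigma)/2$, so $J_1/M_1=(\sigma+1)(2-\sigma)$. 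The inequality $J_1>\sigma(\sigma+1)M_1$ is thus equivalent to $(\sigma+1)(2-\sigma)>\sigma(\sigma+1)$, i.e. to $2-\sigma>\sigma$, which holds precisely because $\sigma<1$.

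The routine parts are the Beta/Gamma evaluation and the scaling bookkeeping; the one step carrying the real content is the exact collapse of the Gamma ratio to $(\sigma+1)(2-\sigma)$, since it is this cancellation that converts the analytic inequality into the transparent condition $\sigma<1$. I would take care to verify convergence of the integrals (which needs $1/\sigma>\tfrac12$, guaranteed by $\sigma<1$) and to record the scaling exponents $1/\sigma$ and $(1-\sigma)/\sigma$ correctly, as a sign or exponent slip there would be the only plausible source of error.
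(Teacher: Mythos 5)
Your proof is correct and follows essentially the same route as the paper: both reduce the claim, via the scaling $\Phi_c(x)=c^{1/(2\sigma)}\Phi_1(cx)$, to the single inequality $\frac{1}{\sigma(\sigma+1)}\int\Phi_1^{2\sigma+2}>\int\Phi_1^2$, and both verify it by relating the two integrals $\int(1+u^2)^{-(\sigma+1)/\sigma}du$ and $\int(1+u^2)^{-1/\sigma}du$ through the factor $1-\sigma/2$, concluding from $\sigma<1$. The only cosmetic differences are that you package the differentiation through $Q(c)$ and invoke the Beta--Gamma formula, whereas the paper differentiates $P$ and $M$ directly and obtains the same reduction identity by an integration by parts.
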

\begin{proof}
First of all,  by the definition of \eqref{phi} and \eqref{Phi} and  straight calculation, it follows that
\begin{align}\label{c1}
P(\phi_c)+\frac c2M(\phi_c)
 =\frac1{2\sigma+2}\|\Phi_c\|_{L^{2\sigma+2}}^{2\sigma+2}>0.
\end{align}

Next, letting $\phi:=\phi_1$,  it follows that $\phi_c(x)=c^{\frac1{2\sigma}}\phi(cx)$,
$$\int|\phi_c|^2dx=c^{\frac1{\sigma}-1}\|\phi\|_{L^2}^2,\ \ P(\phi_c)=c^{\frac1{\sigma}}P(\phi).$$
Hence,
\begin{align*}
 \partial_cP(\phi_c)+\frac c2\partial_cM(\phi_c)
 =c^{\frac1{\sigma}-1}\left(\frac1\sigma P(\phi)+\frac12(\frac1\sigma-1)M(\phi)
 \right).
\end{align*}
Note that
$$P(\phi_c)=-\frac12c\|\Phi_c\|_{L^2}^2+\frac1{2\sigma+2}\|\Phi_c\|^{2\sigma+2}_{2\sigma+2}$$
and recall
$$\Phi(x):=\Phi_1(x)=\left(\frac{2(\sigma+1)}{(\sigma x)^2+1}\right)^{\frac1{2\sigma}}.$$
We have then
\begin{align*}
&\frac1\sigma P(\phi)+\frac12(\frac1\sigma-1)M(\phi)\\
=&-\frac1{2\sigma}\int_{\mathbb R}\Phi^2dx+\frac1{2\sigma(\sigma+1)}\int_{\mathbb R}\Phi^{2\sigma+2}dx+(\frac1{2\sigma}-\frac12)\int_{\mathbb R}\Phi^2dx\\
=&\frac1{2\sigma(\sigma+1)}\int_{\mathbb R}\Phi^{2\sigma+2}dx-\frac12\int_{\mathbb R}\Phi^2dx\\
=&\frac{(2\sigma+2)^{\frac{1}\sigma}}{2}\int_{\mathbb R}
\left(\frac{2}{\sigma(\sigma^2x^2+1)^{\frac{\sigma+1}{\sigma}}}
-\frac{1}{(\sigma^2x^2+1)^{\frac{1}{\sigma}}}
\right)dx\\
=&\frac{(2\sigma+2)^{\frac{1}\sigma}}{2\sigma}\int_{\mathbb R}
\left(\frac{2}{\sigma(x^2+1)^{\frac{\sigma+1}{\sigma}}}
-\frac{1}{(x^2+1)^{\frac{1}{\sigma}}}
\right)dx.
\end{align*}

Finally, we are sufficed to show the integration
\begin{align}\label{c2}I=I(\sigma)=\int_{\mathbb R}
\left(\frac{2}{\sigma(x^2+1)^{\frac{\sigma+1}{\sigma}}}
-\frac{1}{(x^2+1)^{\frac{1}{\sigma}}}
\right)dx>0.
\end{align}
In fact, by straight calculation,
\begin{align*}
\frac d{dx}\left[x(x^2+1)^{-\frac1\sigma}\right]
&=(x^2+1)^{-\frac1\sigma}-\frac1\sigma\cdot 2x^2\cdot(x^2+1)^{-\frac1\sigma-1}\\
&=(1-\frac2\sigma)(x^2+1)^{-\frac1\sigma}+\frac2\sigma(x^2+1)^{-\frac1\sigma-1},
\end{align*}
which implies that if $\sigma<2$,
\begin{align*}
\int_{\mathbb R}\left[(1-\frac2\sigma)(x^2+1)^{-\frac1\sigma}+\frac2\sigma(x^2+1)^{-\frac1\sigma-1}\right]dx=0
\end{align*}
or
\begin{align*}
\int(x^2+1)^{-\frac1\sigma-1}dx=(1-\frac\sigma2)\int(x^2+1)^{-\frac1\sigma}dx.
\end{align*}
Since $\sigma<1$, we obtain that
\begin{align*}
I(\sigma)=\int_{\mathbb R}
\left(\frac{2}{\sigma(x^2+1)^{\frac{\sigma+1}{\sigma}}}
-\frac{1}{(x^2+1)^{\frac{1}{\sigma}}}
\right)dx=2(\frac1\sigma-1)\int(x^2+1)^{-\frac1\sigma}dx>0.
\end{align*}

As a result, we conclude Lemma \ref{lemS''<0} by \eqref{c1} and \eqref{c2}.
\end{proof}

\section{Proof of Theorem \ref{th1}}
In this section, we prove Theorem \ref{th1}. The proof is based on the variational characterization of solitary
wave solutions in section 2. Using the notations defined in the above section, we set
$$\mathcal A^+=\{u\in H^1(\mathbb R)\setminus\{0\}: S_c(u)<S_c(\phi_c), L_c(u)<\frac{\sigma+1}\sigma d(c)\},$$
$$\mathcal A^-=\{u\in H^1(\mathbb R)\setminus\{0\}: S_c(u)<S_c(\phi_c), L_c(u)>\frac{\sigma+1}\sigma d(c)\}.$$

\begin{lemma}\label{leminva}
The sets $\mathcal A^+$ and $\mathcal A^-$ are invariant under the flow of \eqref{eqgdnls}, i.e., if $u_0\in\mathcal A^+$ (resp. $\mathcal A^-$),
then the solution $u(t)$ of \eqref{eqgdnls} with $u(0)=u_0$ belongs to $\mathcal A^+$ (resp. $\mathcal A^-$) as long as $u(t)$ exists.
\end{lemma}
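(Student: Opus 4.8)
The plan is to exploit that $S_c$ is conserved along the flow, together with the coercivity information encoded in Lemma \ref{lemkgeq0}. First I would record that, since $E$, $P$ and $M$ are conserved by \eqref{M}--\eqref{P}, the functional $S_c(u)=E(u)+cP(u)+\frac{c^2}4M(u)$ from \eqref{S} is constant along any $H^1$-solution, so that $S_c(u(t))=S_c(u_0)$ on the maximal existence interval. Moreover $M(u(t))=M(u_0)>0$ forces $u(t)\in H^1(\mathbb R)\setminus\{0\}$, so the nonvanishing requirement in the definition of $\mathcal A^\pm$ is automatic, and by Lemma \ref{lemSphi} the threshold is exactly $S_c(\phi_c)=d(c)$.

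The algebraic backbone is the identity coming from \eqref{S}, \eqref{K}, \eqref{L}, namely $N(u)=L_c(u)-K_c(u)$, whence
$$S_c(u)=\frac{\sigma}{\sigma+1}L_c(u)+\frac1{\sigma+1}K_c(u).$$
I would then run a continuity (first-touching-time) argument. Suppose $u_0\in\mathcal A^+$, so $L_c(u_0)<\frac{\sigma+1}\sigma d(c)$, and assume for contradiction that the continuous map $t\mapsto L_c(u(t))$ reaches $\frac{\sigma+1}\sigma d(c)$ for the first time at some $t_1$. At $t_1$, Lemma \ref{lemkgeq0}, read through the gauge $f=e^{-icx/2}u$ (under which $L_c(u)=\|\partial_xf\|_{L^2}^2$ and $K_c(u)=\tilde K_c(f)$), gives $K_c(u(t_1))\geq0$. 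Plugging into the identity yields $S_c(u(t_1))\geq\frac{\sigma}{\sigma+1}\cdot\frac{\sigma+1}\sigma d(c)=d(c)$, contradicting $S_c(u(t_1))=S_c(u_0)<d(c)$. Hence $L_c(u(t))<\frac{\sigma+1}\sigma d(c)$ for all $t$, and $u(t)\in\mathcal A^+$.

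The case $u_0\in\mathcal A^-$ is handled identically: now $L_c(u_0)>\frac{\sigma+1}\sigma d(c)$, and if $t\mapsto L_c(u(t))$ were to descend to the level $\frac{\sigma+1}\sigma d(c)$ at a first time $t_1$, the same application of Lemma \ref{lemkgeq0} and the same identity would again force $S_c(u(t_1))\geq d(c)$, a contradiction. Thus $L_c$ stays strictly above the threshold and $\mathcal A^-$ is invariant.

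I do not expect a serious obstacle once $S_c$-conservation and Lemma \ref{lemkgeq0} are in hand: the invariance is a soft continuity argument, and the genuinely delicate ``zero mass'' feature of \eqref{eqphi}---which denies any $L^2$ lower bound from $L_c$---has already been absorbed into Lemma \ref{lemkgeq0} and into the strict positivity $d(c)>0$ established in Lemma \ref{lemSphi}. The only technical points requiring care are the continuity of $t\mapsto L_c(u(t))$, which follows from continuity of the solution in $H^1$ and of the gauge/quadratic functionals composing $L_c$, and the fact that the whole argument is carried out on the maximal existence interval, in accordance with the ``as long as $u(t)$ exists'' clause.
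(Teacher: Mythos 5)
Your proposal is correct and follows essentially the same route as the paper: conservation of $S_c$, continuity of $t\mapsto L_c(u(t))$, the identity $S_c=\frac{\sigma}{\sigma+1}L_c+\frac{1}{\sigma+1}K_c$, and Lemma \ref{lemkgeq0} applied at a first touching time of the level $\frac{\sigma+1}{\sigma}d(c)$. The only cosmetic difference is the order of the contradiction (you apply Lemma \ref{lemkgeq0} directly to conclude $K_c\geq 0$ and contradict $S_c<d(c)$, while the paper first deduces $K_c<0$ from $S_c<d(c)$ and then contradicts the contrapositive of Lemma \ref{lemkgeq0}); the substance is identical.
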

\begin{proof}
Let $u_0\in\mathcal A^+$ and $I=(-T_*,T^*)$ be the maximal existence interval of the solution $u(t)$ of \eqref{eqgdnls} with $u(0)=u_0$.
By $u_0\neq0$ and the conservation laws \eqref{E}, \eqref{M} and \eqref{P}, we have that $u(t)\neq0$ for $t\in I$. By definition of $S_c(u)$,
$S_c$ is also conserved, which means that $S_c(u(t))=S_c(u_0)<d(c)$ for $t\in I$.
By continuity of the function $t\mapsto L_c(u(t))$, we assume that there exits some $t_0$ such that
$$L_c(u(t_0))=\tilde L_c(e^{-\frac c2ix}u(t_0))=\frac{\sigma+1}\sigma d(c).$$
Thus, from
\begin{equation*}
d(c)>S_c(u(t_0))=\frac\sigma{\sigma+1}L_c(u(t_0))+\frac1{\sigma+1}K_c(u(t_0))=d(c)+\frac1{\sigma+1}K_c(u(t_0)),
\end{equation*}
we get that
$\tilde K_c(e^{-\frac c2ix}u(t_0))=K_c(u(t_0))<0$.
On the other hand, by Lemma \ref{lemkgeq0}, it holds that $$\tilde L_c(e^{-\frac c2ix}u(t_0))=L_c(u(t_0))>\frac{\sigma+1}\sigma d(c)$$
 and we get a contradiction.
Hence $\mathcal A^+$ is invariant under the flow
of \eqref{eqgdnls}.

In the same way, we see that $\mathcal A^-$ is invariant under the flow
of \eqref{eqgdnls}.
\end{proof}

\begin{lemma}\label{lem2}
Let $w\in H^1(\mathbb R)$. For any $\varepsilon>0$, there exists some $\delta>0$ such that
if
\begin{align}
|S_c(w)-S_c(\phi_c)|+|K_c(w)|<\delta,
\end{align}
then
\begin{align*}
\inf_{(\theta,y)\in\mathbb R^2}\|w-e^{i\theta}\phi_c(\cdot-y)\|_{\dot H^1}<\varepsilon.
\end{align*}
\end{lemma}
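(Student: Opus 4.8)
The plan is to argue by contradiction and to reduce the claim to the compactness analysis of minimizing sequences already carried out in the proof of Lemma \ref{lemSphi}. Suppose the conclusion fails. Then there exist $\varepsilon_0>0$ and a sequence $\{w_n\}\subset H^1(\mathbb R)$ with
$$|S_c(w_n)-S_c(\phi_c)|+|K_c(w_n)|\to0\quad\text{while}\quad\inf_{(\theta,y)\in\mathbb R^2}\|w_n-e^{i\theta}\phi_c(\cdot-y)\|_{\dot H^1}\geq\varepsilon_0.$$
First I would pass to the gauge variable $v_n:=e^{-\frac c2ix}w_n$, so that $\tilde S_c(v_n)=S_c(w_n)\to d(c)$ and $\tilde K_c(v_n)=K_c(w_n)\to0$ by Lemma \ref{lemSphi}. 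Since $\tilde S_c(v)-\frac1{\sigma+1}\tilde K_c(v)=\frac\sigma{\sigma+1}\|\partial_xv\|_{L^2}^2$, this forces $\|\partial_xv_n\|_{L^2}^2\to\frac{\sigma+1}\sigma d(c)$; moreover the interpolation estimate behind \eqref{e2}--\eqref{2sigma+2} yields a uniform bound on $\|v_n\|_{L^{2\sigma+2}}$. Hence $\{v_n\}$ is an almost-constrained minimizing sequence for \eqref{dtilde}, bounded in $\dot H^1(\mathbb R)\cap L^{2\sigma+2}(\mathbb R)$.

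Next I would apply the profile decomposition to $\{v_n\}$ verbatim as in the proof of Lemma \ref{lemSphi}, writing $v_n=\Sigma_{j=1}^{L}V^j(\cdot-x_n^j)+R_n^L$ with the orthogonality relations \eqref{orth1}--\eqref{orth3}. The only difference from Lemma \ref{lemSphi} is that the constraint now holds only asymptotically, $\tilde K_c(v_n)\to0$ rather than $=0$, but this changes nothing. Indeed, by the gradient orthogonality \eqref{orth2} each profile satisfies $\|\partial_xV^j\|_{L^2}^2\leq\frac{\sigma+1}\sigma d(c)$, so $\tilde K_c(V^j)\geq0$ by Lemma \ref{lemkgeq0}, and likewise $\tilde K_c(R_n^L)\geq0$ asymptotically; since $\tilde K_c(v_n)\to0$, the identity \eqref{orth3} forces $\tilde K_c(V^j)=0$ for every $j$ and $\tilde K_c(R_n^L)\to0$. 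Then $\tilde S_c(V^j)\geq d(c)$ for each nontrivial profile, while $\tilde S_c(R_n^L)\geq\frac\sigma{\sigma+1}\|\partial_xR_n^L\|_{L^2}^2\geq0$; comparing with $\tilde S_c(v_n)\to d(c)>0$ shows that exactly one profile, say $V^1$, is nonzero, that $\tilde S_c(V^1)=d(c)$ and $\tilde K_c(V^1)=0$, and that $\|\partial_xR_n^L\|_{L^2}\to0$. The Lagrange-multiplier computation of Lemma \ref{lemSphi} then gives $\tilde S_c'(V^1)=0$, so $e^{\frac c2ix}V^1$ solves \eqref{eqphi}, and by Lemma \ref{lemuniq} we obtain $e^{\frac c2ix}V^1(x)=e^{i\theta_*}\phi_c(x-x_*)$.

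Finally I would transfer this information back to $\{w_n\}$. Since only one profile survives, the above yields $\|\partial_x(v_n-V^1(\cdot-x_n^1))\|_{L^2}\to0$, that is, $v_n$ converges in the gauged gradient norm to a translate of $V^1$. Writing $w_n=e^{\frac c2ix}v_n$ and $e^{i\theta_n}\phi_c(\cdot-y_n)=e^{\frac c2ix}V^1(\cdot-x_n^1)$ with $\theta_n=\theta_*+\frac c2x_n^1$ and $y_n=x_*+x_n^1$, I would conclude $\inf_{(\theta,y)}\|w_n-e^{i\theta}\phi_c(\cdot-y)\|_{\dot H^1}\to0$, contradicting the lower bound $\varepsilon_0$ and proving the lemma. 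The delicate point, which I expect to be the main obstacle, is precisely this extraction of a genuine limiting profile together with the final transfer: because of the ``zero mass'' degeneracy the form $L_c$ controls only the gauged gradient $\|\partial_x(e^{-\frac c2ix}\cdot)\|_{L^2}$ and provides no $L^2$ bound, so the entire concentration-compactness argument has to be run in $\dot H^1\cap L^{2\sigma+2}$ rather than in $H^1$, and one must verify with care that the surviving profile produces strong convergence in exactly the $\dot H^1$ norm appearing in the statement.
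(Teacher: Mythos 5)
Your proposal follows essentially the same route as the paper: argue by contradiction, pass to the gauged variable $v_n=e^{-\frac c2 ix}w_n$, rerun the profile decomposition argument of Lemma \ref{lemSphi} for the now only asymptotically constrained minimizing sequence, conclude that a single profile survives and is (by the Lagrange multiplier computation and Lemma \ref{lemuniq}) a phase/translation of $\phi_c$, and transfer the resulting strong $\dot H^1$ convergence back to $w_n$. In fact you supply more detail than the paper's own proof, in particular on why the relaxed constraint $\tilde K_c(v_n)\to0$ changes nothing and on the gauge transfer at the end, so the argument is fine.
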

\begin{proof}
By contradiction, we assume that there exist $\varepsilon_0>0$ and some sequences $\{w_n\}\subset H^1(\mathbb R)$
such that
$$S_c(w_n)\rightarrow d(c),\ \ K_c(w_n)\rightarrow0,\ \ as\ n\rightarrow\infty,$$ but
\begin{align}\label{varepsilon0}
\inf_{(\theta,y)\in\mathbb R^2}\|w_n-e^{i\theta}\phi_c(\cdot-y)\|_{\dot H^1}>\varepsilon_0.
\end{align}

On the other hand, we
follow the
  the proof of Lemma \ref{lemSphi}, using the profile decomposition, to find that
$$e^{-\frac c2ix}w_n-V^1(\cdot-x_n^1)\rightarrow0,\ \ in\ \dot H^1(\mathbb R)$$
i.e. $$w_n(\cdot+x_n^1)-e^{\frac c2ix}V^1\rightarrow0,\ \ in\ \dot H^1(\mathbb R)$$
and
$e^{\frac c2ix}V^1(x)=e^{i\theta}\phi_c(x-x_0)$ solves the equation \eqref{eqphi}.
Hence, we obtain that
for large $n$ it holds that
\begin{align*}
\inf_{(\theta,y)\in\mathbb R^2}\|w_n-e^{i\theta}\phi_c(\cdot-y)\|_{\dot H^1}
\leq \|w_n-e^{i\theta_0}\phi_c(\cdot-x_0-x_n)\|_{\dot H^1}
<\varepsilon_0,
\end{align*}
which is a contradiction with \eqref{varepsilon0}.
\end{proof}

\begin{lemma}\label{lem3}
For any $\varepsilon>0$, there exists some $\delta>0$ such that if $\|u_0-\phi_c\|_{H^1}<\delta$,
then the solution $u(t)$ of \eqref{eqgdnls} with $u(0)=u_0$ satisfies
\begin{align*}
|S_c(u(t))-S_c(\phi_c)|+|K_c(u(t))|<\varepsilon,\ \ for\ \ t\in I
\end{align*}
where $I$ is the maximal lifespan.
\end{lemma}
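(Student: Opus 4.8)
The plan is to treat the two quantities $|S_c(u(t))-S_c(\phi_c)|$ and $|K_c(u(t))|$ separately, since the first is governed by a conservation law and the second is not. For the $S_c$-term I would simply note that $S_c=E+cP+\frac{c^2}4M$ is conserved along \eqref{eqgdnls}, so $S_c(u(t))=S_c(u_0)$ for every $t\in I$; as $S_c$ is continuous on $H^1(\mathbb R)$ and $S_c(\phi_c)=d(c)$ by Lemma \ref{lemSphi}, the smallness of $\|u_0-\phi_c\|_{H^1}$ forces $|S_c(u(t))-S_c(\phi_c)|=|S_c(u_0)-S_c(\phi_c)|<\varepsilon/2$. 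This disposes of half of the statement with no dynamical input.

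For the $K_c$-term the first step is to record that, after using the conservation of $M$ and $P$, $K_c(u(t))$ depends on time only through the kinetic energy: from $S_c=\frac\sigma{\sigma+1}L_c+\frac1{\sigma+1}K_c$ and $L_c(u(t))=\|\partial_xu(t)\|_{L^2}^2+cP(u_0)+\frac{c^2}4M(u_0)$ one gets $K_c(u(t))=(\sigma+1)S_c(u_0)-\sigma L_c(u(t))$, so that $|K_c(u(t))|<\varepsilon$ is equivalent to keeping $L_c(u(t))$ within $O(\varepsilon)$ of $L_c(\phi_c)=\frac{\sigma+1}\sigma d(c)$. I would argue by contradiction: if the conclusion fails there are $\varepsilon_0>0$, data $u_{0,n}\to\phi_c$ in $H^1$ and times $t_n$ with $|K_c(u_n(t_n))|\ge\varepsilon_0$. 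Writing $w_n:=u_n(t_n)$ and using that $0<\sigma<1$ makes the nonlinearity $L^2$-subcritical, conservation of $M$ and $E$ together with the one-dimensional Gagliardo--Nirenberg inequality bounds $\|\partial_xu_n(t)\|_{L^2}$ uniformly in $t$ (the relevant power $\sigma+1$ is $<2$); hence $\{w_n\}$ is bounded in $H^1$, $S_c(w_n)\to d(c)$, and $M(w_n),P(w_n)\to M(\phi_c),P(\phi_c)$. The aim is then to show that such a sequence must satisfy $K_c(w_n)\to0$. For this I would feed $v_n:=e^{-\frac c2ix}w_n$ into the profile decomposition of Lemma \ref{lemSphi}, use the asymptotic splittings \eqref{orth2}--\eqref{orth3} of $\tilde S_c$ and $\tilde K_c$, invoke Lemma \ref{lemkgeq0} to get $\tilde K_c(V^j)\ge0$ for each nonzero profile, and use the sign information supplied by the invariant sets $\mathcal A^\pm$ of Lemma \ref{leminva} (separated precisely by the threshold $L_c=\frac{\sigma+1}\sigma d(c)$) to conclude that a single profile, equal to a gauge translate of $\phi_c$ by Lemma \ref{lemuniq}, carries all of $S_c$, whence $K_c(w_n)\to K_c(\phi_c)=0$, contradicting $|K_c(w_n)|\ge\varepsilon_0$.

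The step I expect to be the genuine obstacle is precisely this last concentration argument, and it is the ``zero mass'' difficulty flagged in the introduction. Because $K_c$ is not conserved, conservation laws alone are powerless here: conservation of $E,M,P$ and Gagliardo--Nirenberg only trap $\|\partial_xu(t)\|_{L^2}^2$ in an $O(1)$ interval around $\|\partial_x\phi_c\|_{L^2}^2$, never in a small neighbourhood, so the fine control of $K_c$ can only come from the variational rigidity. The trouble is that the minimization defining $d(c)$ has no $L^2$ constraint, so its minimizing sequences may spread and the decomposition decouples cleanly only in the scaling-critical space $\dot H^1\cap L^{2\sigma+2}$; what rescues the argument is that mass conservation endows the particular sequence $w_n$ with an $L^2$ bound that generic minimizing sequences do not possess, and the delicate part is to marry this $L^2$ information with the scaling-critical profile decomposition so as to force genuine compactness and thereby exclude $|K_c(w_n)|\ge\varepsilon_0$.
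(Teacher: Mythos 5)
Your treatment of the $S_c$-term (conservation plus continuity) and your reduction of the $K_c$-term to keeping $L_c(u(t))$ near $L_c(\phi_c)=\frac{\sigma+1}{\sigma}d(c)$ both match the paper. But the core of the lemma is precisely how to obtain that control on $L_c(u(t))$, and your proposed mechanism does not close. The contradiction argument you sketch would apply the profile decomposition to $w_n=u_n(t_n)$ knowing only $S_c(w_n)\to d(c)$; but the machinery of Lemma \ref{lemSphi} requires the sequence to be a minimizing sequence for the \emph{constrained} problem, i.e.\ $\tilde K_c\to 0$ as an input, and Lemma \ref{lemkgeq0} can only be invoked for profiles once one already knows $\|\partial_x V^j\|_{L^2}^2\le\frac{\sigma+1}{\sigma}d(c)$ --- which is equivalent to the very bound on $L_c(w_n)$ you are trying to establish. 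So the argument is circular: the conclusion $K_c(w_n)\to0$ is being used as a hypothesis of the decomposition that is supposed to prove it. Your appeal to the invariant sets $\mathcal A^\pm$ at the parameter $c$ itself also cannot help, because $\phi_c$ sits exactly on their common boundary ($S_c(\phi_c)=d(c)$, $L_c(\phi_c)=\frac{\sigma+1}{\sigma}d(c)$), so an $H^1$-small perturbation $u_0$ of $\phi_c$ need not lie in either set, and membership gives only one-sided information in any case.

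The paper's actual device, which is absent from your proposal, is to perturb the \emph{parameter}: one shows via Taylor expansion and Lemma \ref{lemS''<0} (namely $P(\phi_c)+\frac c2M(\phi_c)>0$ and $\partial_cP(\phi_c)+\frac c2\partial_cM(\phi_c)>0$) that for small $\lambda>0$ the datum $u_0$ lies \emph{strictly inside} $\mathcal A^+$ at parameter $c+\lambda$ and strictly inside $\mathcal A^-$ at parameter $c-\lambda$, the strict gaps being of size $\sim\lambda^2$ for the $S$-inequality and $\sim\lambda$ for the $L$-inequality, which absorb the $O(\delta)$ error once $\delta\ll\lambda^2$. Lemma \ref{leminva} then propagates both memberships for all $t\in I$, yielding the two-sided sandwich $L_{c+\lambda}(u(t))<\frac{\sigma+1}{\sigma}d(c+\lambda)$ and $L_{c-\lambda}(u(t))>\frac{\sigma+1}{\sigma}d(c-\lambda)$; since $L_{c\pm\lambda}$ differ from $L_c$ by conserved quantities times $O(\lambda)$, this traps $L_c(u(t))$ within $O(\lambda)$ of $\frac{\sigma+1}{\sigma}d(c)$ uniformly in time, and the identity $K_c=(\sigma+1)S_c-\sigma L_c$ finishes the proof. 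Without this two-parameter trapping (or some substitute for it), your outline does not prove the lemma.
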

\begin{proof}
For sufficiently small $\delta>0$, which will be determined  later, it follows that $\|u_0-\phi_c\|_{H^1}<\delta$
implies
\begin{align}\label{delta1}
S_{c+\lambda}(u_0)=S_{c+\lambda}(\phi_c)+O(\delta)
\end{align}
where $\lambda$ will be chosen later such that $|\lambda|$ is small.
Applying Taylor expansion to the function $S_{c+\lambda}(\phi_{c+\lambda})$ of $\lambda$, we obtain that
\begin{align*}
S_{c+\lambda}(\phi_{c+\lambda})&=E(\phi_{c+\lambda})+(c+\lambda)P(\phi_{c+\lambda})+\frac{(c+\lambda)^2}4M(\phi_{c+\lambda})\\
&=S_{c}(\phi_{c+\lambda})+\lambda(P(\phi_{c+\lambda})+\frac{c}2M(\phi_{c+\lambda}))+\frac{\lambda^2}4M(\phi_{c+\lambda})\\
&=S_{c}(\phi_{c+\lambda})-S_{c}(\phi_{c})+\lambda\left(P(\phi_{c+\lambda})-P(\phi_c)+\frac{c}2M(\phi_{c+\lambda})-\frac{c}2M(\phi_c)\right)\\
&+\frac{\lambda^2}4\left(M(\phi_{c+\lambda})-M(\phi_{c})\right)+S_{c}(\phi_{c})+
\lambda\left(P(\phi_c)+\frac{c}2M(\phi_c)\right)+\frac{\lambda^2}4M(\phi_{c})\\
&=\frac{\lambda^2}2\langle S_c''(\phi_c)\partial_c\phi_c,\partial_c\phi_c\rangle
+\lambda^2\left(\langle P'(\phi_c),\partial_c\phi_c\rangle+\frac c2\langle M'(\phi_c),\partial_c\phi_c\rangle
\right)\\&+S_{c+\lambda}(\phi_{c})+o(\lambda^2)\\
&=-\frac{\lambda^2}2\langle S_c''(\phi_c)\partial_c\phi_c,\partial_c\phi_c\rangle+S_{c+\lambda}(\phi_{c})+o(\lambda^2),
\end{align*}
where we use the formular
$$\langle S_c''(\phi_c)\partial_c\phi_c,\partial_c\phi_c\rangle=-\langle P'(\phi_c),\partial_c\phi_c\rangle-\frac c2\langle M'(\phi_c),\partial_c\phi_c\rangle
=-\partial_cP(\phi_c)-\frac c2\partial_cM(\phi_c)$$
which is negative by Lemma \ref{lemS''<0}.
Thus, combined with \eqref{delta1}, we obtain that
\begin{align*}
S_{c+\lambda}(u_0)=S_{c+\lambda}(\phi_{c+\lambda})+\frac{\lambda^2}2\langle S_c''(\phi_c)\partial_c\phi_c,\partial_c\phi_c\rangle+o(\lambda^2)+O(\delta).
\end{align*}
For any $\lambda$ satisfying $|\lambda|\in(0,\lambda_0)$ with some  $\lambda_0>0$ small enough, we may choose $\delta>0$ small such that
\begin{align}\label{lambda1}
S_{c+\lambda}(u_0)<S_{c+\lambda}(\phi_{c+\lambda}).
\end{align}

Now we deal with $L_c$. Note that
\begin{align*}
L_{c+\lambda}(u_0)=L_{c+\lambda}(\phi_{c})+O(\delta).
\end{align*}
By Taylor expansion, we estimate
\begin{align*}
L_{c+\lambda}(u_0)&=L_{c+\lambda}(\phi_{c})+O(\delta)\\
&=L_{c+\lambda}(\phi_{c})-L_{c}(\phi_{c})+L_{c}(\phi_{c})+O(\delta)\\
&=\lambda\left(P(\phi_{c})+\frac{c}2M(\phi_c)
\right)+\frac{\sigma+1}\sigma d(c)+o(\lambda)+O(\delta)\\
&=\frac{\sigma+1}\sigma d(c+\lambda)-\frac{\sigma+1}\sigma (d(c+\lambda)-d(c))+\lambda\left(P(\phi_{c})+\frac{c}2M(\phi_c)
\right)+o(\lambda)+O(\delta)\\
&=\frac{\sigma+1}\sigma d(c+\lambda)-\frac{\lambda}{\sigma}\left(P(\phi_{c})+\frac{c}2M(\phi_c)
\right)+o(\lambda)+O(\delta),
\end{align*}
which, by choosing $\lambda_0$ and $\delta>0$ smaller, is small than $\frac{\sigma+1}\sigma d(c+\lambda)$,
i.e.
\begin{align*}
L_{c+\lambda}(u_0)<\frac{\sigma+1}\sigma d(c+\lambda),\ \ \forall\lambda\in(0,\lambda_0).
\end{align*}
Similarly, it must hold that for small $\lambda_0$ and $\delta>0$,
\begin{align*}
L_{c-\lambda}(u_0)>\frac{\sigma+1}\sigma d(c-\lambda),\ \ \forall\lambda\in(0,\lambda_0).
\end{align*}
In view of the invariant sets $\mathcal A^\pm$, these estimates, combined with \eqref{lambda1}, imply that
for any $t\in I=(-T_*,T^*)$,
\begin{align}\label{lambda2}
L_{c+\lambda}(u(t))<\frac{\sigma+1}\sigma d(c+\lambda),\ \
L_{c-\lambda}(u(t))>\frac{\sigma+1}\sigma d(c-\lambda).
\end{align}
Therefore,  we can choose sufficiently small $\lambda_0>0$ and $\delta>0$ such that
if  $\|u_0-\phi_c\|_{H^1}<\delta$,
\begin{align*}
|S_c(u(t))-S_c(\phi_c)|+|L_c(u(t))-L_c(\phi_c)|<\frac{\varepsilon}{A},\ \ for\ \ t\in I
\end{align*}
with some constant $A>0$ large enough.
Since $K_c=(\sigma+1)S_c-\sigma L_c$, we finally obtain that
\begin{align*}
|S_c(u(t))-S_c(\phi_c)|+|K_c(u(t))|<\varepsilon,\ \ for\ \ t\in I.
\end{align*}
\end{proof}


Now we prove Theorem \ref{th1}.\\
{\bf The  Proof Theorem 1.1.}
By contradiction, we assume that there exists some $\varepsilon_0>0$ such that for any small $\delta>0$ there exists some
sequence $\{t_n\}$ satisfying $\|u_0-\phi_c\|_{H^1}<\delta$, but
\begin{equation}\label{3.6}
\inf_{(\theta,y)\in\mathbb R^2}\|u(t_n)-e^{i\theta}\phi_c(\cdot-y)\|_{H^1}^2\geq\varepsilon_0.
\end{equation}
Taking $\delta$ small enough, we get from Lemma \ref{lem3} that
\begin{align*}
|S_c(u(t_n))-S_c(\phi_c)|+|K_c(u(t_n))|<\tilde\delta,
\end{align*}
where $\tilde\delta>0$ can be chosen sufficiently small such that, using Lemma \ref{lem2}, it holds that
\begin{equation*}
\inf_{(\theta,y)\in\mathbb R^2}\|u(t_n)-e^{i\theta}\phi_c(\cdot-y)\|_{\dot H^1}^2<\frac{\varepsilon_0}2.
\end{equation*}
Moreover, by mass conversation, we have
$$\|u(t_n)\|_{L^2}^2-\|\phi_c\|_{L^2}^2=O(\delta).$$
Choosing $\delta$ smaller, we obtain then
$$\inf_{(\theta,y)\in\mathbb R^2}\|u(t_n)-e^{i\theta}\phi_c(\cdot-y)\|_{L^2}^2<\frac{\varepsilon_0}2.$$
Finally, there holds that
\begin{equation*}
\inf_{(\theta,y)\in\mathbb R^2}\|u(t_n)-e^{i\theta}\phi_c(\cdot-y)\|_{ H^1}^2<\varepsilon_0,
\end{equation*}
which  contradicts \eqref{3.6}.
\  \ \ \ \ \ \ \ \ \ \ \  \ \ \ \ \ \ \ \ \ \ \  \ \ \ \ \ \ \ \ \ \ \  \ \ \ \ \ \ \ \ \ \ \  \ \ \ \ \ \ \ \ \ \ \ $\Box$

\vspace{0.5cm}

{\bf Acknowledgment }
The author would like to thank Cui Ning for useful discussions.
This work  was  supported by the National Natural Science Foundation of China (No.11301564, 11771469).

\vspace{0.5cm}


\end{document}